\let\savedbigtimes\bigtimes
\let\bigtimes\relax
\let\bigtimes\savedbigtimes
\xpatchcmd{\paragraph}{\normalfont}{{\normalfont\bfseries}}{}{}
\newtheorem{theorem}{Theorem}[section]
\newtheorem{lemma}[theorem]{Lemma}
\newtheorem{proposition}[theorem]{Proposition}
\newtheorem{definition}[theorem]{Definition}
\newtheorem{remark}[theorem]{Remark}
\theoremstyle{definition}
\newtheorem*{rmk*}{Remark}
\def\la{\langle}
\def\ra{\rangle}
\def\bbR{{\mathbb{R}}}
\def\bbE{{\mathbb{E}}}
\def\bbP{{\mathbb{P}}}
\def\bG{{\boldsymbol G}}
\def\bx{{\boldsymbol x}}
\def\bsig{{\boldsymbol \sigma}}
\def\op{{\mathsf{op}}}
\def\sym{{\mathsf{sym}}}
\def\mix{{\mathsf{mix}}}
\def\tmix{{t_{\mix}}}
\def\bSig{{\boldsymbol{\Sigma}}}
\def\beq#1\eeq{%
    \begin{equation}%
    #1%
    \end{equation}%
}
\def\baln#1\ealn{%
    \begin{align*}%
    #1%
    \end{align*}%
}
\def\balnn#1\ealnn{%
    \begin{align}%
    #1%
    \end{align}%
}
\title{Exponentially Slow Mixing of the Low Temperature SK Model}
\author{Mark Sellke}
\begin{document}



\begin{abstract}
We give a short proof that low-temperature dynamics for the Sherrington–Kirkpatrick model have mixing time exponential in the system size, based on the recently proved existence of gapped spin configurations \cite{MSS-FB,DGZ-MaxStable}.
This result is in contrast with a well established physics prediction which posits a stretched exponential mixing time of order $e^{N^{1/3 \pm o(1)}}$.
Our proof clarifies that this prediction cannot apply to mixing from worst case initial conditions, but should presumably be understood to concern dynamics from a suitably random initialization.
\end{abstract}

\maketitle

\section{Model and notions of gapped states}
Given a matrix $\bG=(g_{i,j})_{1\le i, j\le N}$ of IID standard Gaussian couplings, the Sherrington--Kirkpatrick (SK) Hamiltonian is defined by
\begin{equation}\label{eq:H}
H_N(\sigma)\coloneqq \la \bsig,\bG\bsig\ra/\sqrt{N},\qquad \bsig\in\bSig_N\equiv \{\pm 1\}^N.
\end{equation}
Fixing an inverse temperature $\beta>0$, the associated Gibbs measure is 
\[
\mu_{\beta}(\sigma;\bG)=e^{\beta H_N(\sigma)}/Z_N(\beta)
\]
where $Z_N(\beta)=\sum_{\bsig\in \{\pm 1\}^N} e^{\beta H_N(\bsig)}$ is the partition function.
Introduced in \cite{sherrington1975solvable}, the SK model is the prototypical example of a mean-field disordered system.
This model is known to exhibit a phase transition at the critical temperature $\beta_c=1$.
For $\beta\leq \beta_c$, the free energy is given by the replica-symmetric prediction
\[
\lim_{N\to\infty}
\bbE\log Z_N(\beta)
=
\lim_{N\to\infty}
\log \bbE Z_N(\beta)
=
\beta^2/2,
\]
while for $\beta>\beta_c$ the limiting free energy is strictly smaller than $\beta^2/2$ and requires the theory of replica symmetry breaking to understand (see e.g. \cite{parisi1979infinite,parisi1980sequence,ruelle1987mathematical,aizenman2003extended,talagrand2006parisi,panchenko2013parisi,panchenko2014parisi,panchenko2013sherrington}).

We will be concerned with the dynamics, which have been extensively studied since \cite{sompolinsky1981dynamic}.
In the high-temperature phase, it is now understood that the canonical Glauber dynamics mixes rapidly for $\beta$ smaller than a positive constant \cite{eldan2021spectral,anari2021entropic,adhikari2022spectral,anari2024trickle}, with efficient approximate sampling being possible for all $\beta<\beta_c$ using alternative diffusion-based methods \cite{alaoui2022sampling,alaoui2025sampling,huang2024sampling}.

The mixing time of the low temperature SK model was extensively studied in the physics literature \cite{rodgers1989distribution,vertechi1989energy,billoire2010distribution}, with (non-rigorous) theoretical arguments and simulations suggesting the stretched exponential scaling
\begin{equation}
\label{eq:t-mix-1/3}
\tmix(\beta)=e^{N^{1/3 \pm o(1)}}
\end{equation}
for the mixing time of Glauber dynamics at inverse temperature $\beta>\beta_c$.
(Note $\tmix$ is itself a random variable, so this prediction is implicitly for its typical behavior.)
Notably, \cite{monthus2009eigenvalue} evaluated the exact spectral gap for several instantiations of the SK model (for system sizes $6\leq N\leq 20$), and found close numerical agreement with \eqref{eq:t-mix-1/3}.

Our main result disproves a strong form of this prediction, showing that the mixing time is exponential in $N$ at low temperature.

\begin{theorem}
\label{thm:main}
There exist absolute constants $\beta_*,c>0$ such that for all $\beta>\beta_*$, the mixing time of Glauber dynamics $\tmix(\beta)$ for \eqref{eq:H} satisfies
\[
\lim_{N\to\infty}
\bbP[\tmix(\beta)\geq e^{c\beta N}]
=1.
\]
\end{theorem}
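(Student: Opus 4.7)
My plan is a Cheeger-style bottleneck conductance argument applied to a Hamming ball around a macroscopically stable spin configuration. The key input I would draw from \cite{MSS-FB,DGZ-MaxStable} is the existence, with probability $1-o(1)$, of absolute constants $c_0,\delta_0>0$ and a configuration $\bsig^*\in\bSig_N$ enjoying the linear energy barrier
\[
H_N(\bsig^*)-H_N(\bsig)\;\ge\; c_0\,d(\bsig,\bsig^*)\qquad\text{for all }\bsig\text{ with }d(\bsig,\bsig^*)\le\delta_0 N,
\]
where $d$ denotes Hamming distance; in words, $\bsig^*$ sits at the bottom of a macroscopic moat whose depth grows linearly in the number of flipped spins.

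Set $r:=\lfloor\delta_0 N/2\rfloor$ and $S:=\{\bsig\in\bSig_N:d(\bsig,\bsig^*)\le r\}$. Because $H_N(-\bsig)=H_N(\bsig)$ the Gibbs measure $\mu_\beta$ is invariant under $\bsig\mapsto-\bsig$, and $S\cap(-S)=\emptyset$ when $r<N/2$, so $\mu_\beta(S)\le 1/2$ is automatic by symmetry. The detailed balance identity $\mu_\beta(\bsig)P(\bsig,\bsig')=\tfrac{1}{NZ_N(\beta)}\min(e^{\beta H_N(\bsig)},e^{\beta H_N(\bsig')})$ for Metropolis-type Glauber dynamics shows that each boundary edge has both endpoints in the stable ball of radius $\le r+1\le \delta_0 N$, hence both satisfy $H_N\le H_N(\bsig^*)-c_0 r$. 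Summing over the at most $N\binom{N}{r}$ boundary edges and dividing by $\mu_\beta(S)\ge\mu_\beta(\bsig^*)=e^{\beta H_N(\bsig^*)}/Z_N(\beta)$ gives
\[
\Phi(S)\;\le\;\binom{N}{r}\,e^{-\beta c_0 r}\;\le\;\exp\!\bigl(N\log 2-\tfrac12\beta c_0\delta_0 N\bigr).
\]
For all $\beta\ge\beta_*:=2\log 2/(c_0\delta_0)$ the right side is bounded by $e^{-c\beta N}$ with $c:=c_0\delta_0/4$, and the standard bottleneck inequality $\tmix(\beta)\gtrsim 1/\Phi(S)$ finishes.

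The main obstacle is confirming the strength of the input I have posited. If \cite{MSS-FB,DGZ-MaxStable} supply only single-spin gaps $g_i(\bsig^*)\ge c_0$ rather than a full linear-in-Hamming-distance barrier, then a one-flip-at-a-time telescoping argument only certifies the linear gap up to distances $O(\sqrt N)$, since each flip perturbs the remaining single-spin gaps by $O(1/\sqrt N)$; that weaker input would yield only $\tmix\ge e^{c\beta\sqrt N}$. Upgrading to macroscopic radius $\delta_0 N$ requires a quenched first-moment bound on each Hamming shell $\{d(\bsig,\bsig^*)=k\}$: the entropy $\binom{N}{k}$ must be beaten by the Gaussian tail probability that $H_N(\bsig)-H_N(\bsig^*)\ge -c_0 k$, which is exponentially small in $k$ by concentration since $H_N(\bsig)-H_N(\bsig^*)$ is subgaussian with variance $O(k(N-k)/N)$. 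Such a union bound, while routine in flavor, is the essential content extracted from the cited gapped/max-stable existence results, and pinning down the exact quantitative form is the only non-routine step in the plan.
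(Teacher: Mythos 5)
Your high-level plan --- take a Hamming ball around a ``gapped'' configuration $\bsig^*$, use $H_N(-\bsig)=H_N(\bsig)$ to certify $\mu_\beta(S)\le 1/2$, and run a Cheeger/bottleneck bound --- is exactly the strategy of the paper. But the central technical input you posit, namely a ready-made linear-in-Hamming-distance barrier
$H_N(\bsig^*)-H_N(\bsig)\ge c_0\,d(\bsig,\bsig^*)$ valid up to distance $\delta_0 N$, is \emph{not} what the cited works supply. What \cite{MSS-FB,DGZ-MaxStable} deliver is precisely the weaker single-spin notion you flag as insufficient: a $(\gamma,\delta)$-gapped state has $\sigma^*_i L_i(\bsig^*)\ge\gamma$ for all but $\delta N$ coordinates (Definition~\ref{def:gapped}, Proposition~\ref{prop:DGZ}). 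So the only non-routine step is exactly the upgrade you describe, and your proposed remedy is where the proof breaks.

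The union bound over Hamming shells, as you write it, treats $\bsig^*$ as a fixed reference point and uses a Gaussian tail for $H_N(\bsig)-H_N(\bsig^*)$. But $\bsig^*$ is selected \emph{after} observing $\bG$ (it is whichever configuration happens to be gapped), so the quenched first-moment computation is not licensed; and a genuine union bound over all candidate centers $\bsig^*\in\bSig_N$ costs an extra factor $2^N$ that kills the Gaussian tail of size $e^{-\Theta(k)}$. The paper sidesteps this adaptivity problem without ever re-randomizing over $\bsig^*$. Because $H_N$ is an exact homogeneous quadratic, the Taylor expansion at $\bsig^*$ has no error term:
\[
H_N(\bsig^*)-H_N(\bsig)=\la\nabla H_N(\bsig^*),\bsig^*-\bsig\ra-\tfrac{1}{\sqrt2}\la\bsig^*-\bsig,\bG^{\sym}(\bsig^*-\bsig)\ra.
\]
The linear term is bounded below using only the gap condition plus the global bound $\|\bG^{\sym}\|_{\op}\le 3$ (to control the $\delta N$ exceptional coordinates), and the quadratic term is bounded above by the \emph{uniform} (over all index sets $I$ with $|I|\le\rho N$) restricted-operator-norm estimate of Proposition~\ref{prop:restricted-norm-bound}. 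Both of these are events about $\bG$ alone, holding simultaneously with the choice of $\bsig^*$, so the argument is immune to the fact that $\bsig^*$ depends on the disorder. Choosing $\delta\ll\rho\ll\gamma$ then gives $H_N(\bsig^*)-H_N(\bsig)\ge\rho\gamma N/2$ on the whole shell at radius $\rho N$ (Lemma~\ref{lem:sphere-energies-smaller}), and the Cheeger step proceeds as you sketched. Your also-mentioned ``one-flip telescoping'' bound of $O(\sqrt N)$ is too pessimistic for the same reason: the quadratic remainder on a radius-$\rho N$ shell is genuinely $O(\rho^{3/2}\sqrt{\log(1/\rho)}\,N)$, not $O(\rho N)\cdot O(1)$, because the submatrix restricted to the $\rho N$ flipped coordinates has small operator norm. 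To repair your write-up, replace the assumed macroscopic-barrier input and the first-moment shell bound with the exact Taylor expansion plus the restricted norm estimate.
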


Theorem~\ref{thm:main} follows straightforwardly from the existence of gapped states recently proved by \cite{MSS-FB,DGZ-MaxStable}, and the main contribution of this note is to point out this implication.
It is worth noting that some of the physics predictions above concern free energy barriers between metastable states, which are apriori different from the worst-case mixing time.
Indeed, gapped states form an exponentially rare portion of the landscape, both according to the uniform measure, and presumably under $\mu_{\beta}$.
Further, \cite{huang2025strong} provided evidence that gapped states are algorithmically intractable to find.
It is thus quite plausible that the prediction \eqref{eq:t-mix-1/3} is correct in some modified form, e.g. for the mixing time from a uniform or Gibbs-distributed initialization.
However the need for this subtlety does not seem to have been observed previously.

We finally note that \cite{alaoui2022sampling} provides evidence of algorithmic hardness for low-temperature sampling from the SK model, via a transport disorder chaos property that implies failure of all stable sampling algorithms. 
This class includes gradient-based methods such as diffusion sampling, but seemingly not Glauber dynamics run for super-logarithmic $\omega(\log N)$ time.
Theorem~\ref{thm:main} thus constitutes further evidence for the computational intractability of sampling at low temperature.

\section{Proof of Theorem~\ref{thm:main}}

For a configuration $\sigma$ and site $i$, the local field is defined to be
\begin{equation}
\label{eq:local-field}
L_i(\bsig)
=
\partial_{\sigma_i} H_N(\bsig)
=
\sigma_i\cdot (H_N(\bsig)
-
H_N(\bsig\oplus e_i)) / 2
.
\end{equation}
Here $\bsig\oplus e_i\in\bSig_N$ disagrees with $\bsig$ precisely in the $i$-th coordinate.
By definition, any local maximum for the SK model (with respect to single spin flips) satisfies $\sigma_i L_i(\bsig)\geq 0$ for all $i\in [N]$.
We say such a local maximum is a $\gamma$-gapped state if $\min_i \sigma_i L_i(\bsig)\geq \gamma>0$, i.e. the energy cost of any spin flip is uniformly positive.
The existence of gapped states was proved for the case of Rademacher disorder by \cite{MSS-FB}.
To deduce Theorem~\ref{thm:main}, a slightly relaxed notion of gapped state suffices, which was shown in \cite{DGZ-MaxStable} to exist under extremely general conditions on the disorder.

\begin{definition}
\label{def:gapped}
    For $\gamma,\delta>0$, the point $\bsig^*\in\bSig_N$ is a \textbf{$(\gamma,\delta)$-gapped state} for $H_N$ if
    \[
    |\{i\in [N]~:~\sigma^*_i L_i(\bsig^*)<\gamma\}|\leq \delta N.
    \]
\end{definition}

\begin{proposition}[{\cite[Theorem 3]{DGZ-MaxStable}}]
\label{prop:DGZ}
Let $(g_{i,j})_{1\leq i<j\leq N}$ be IID random variables with mean $0$, variance $1$, and uniformly bounded third moment.
Then for all $\gamma\in (0,\gamma_*)$ and arbitrarily small $\delta>0$, with high probability as $N\to\infty$, there exists a $(\gamma,\delta)$-gapped state $\bsig^*$ for $H_N$.
\end{proposition}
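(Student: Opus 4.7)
The plan is to count $(\gamma,\delta)$-gapped states via a first-moment (Kac--Rice-type) argument, show existence with high probability via a second-moment method, and extend from Gaussian to general disorder by universality.

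Working first with Gaussian disorder, let $\mathcal{G}$ denote the set of $(\gamma,\delta)$-gapped states. By symmetry among the $\bsig \in \bSig_N$, every $\bsig$ has the same probability of being gapped, so $\bbE[|\mathcal{G}|] = 2^N \, \bbP[\bone \in \mathcal{G}]$. Because $H_N$ is quadratic, $L_i(\bone) = 2(\bG\bone)_i/\sqrt{N}$ is jointly Gaussian across $i$ with variance $4$ and pairwise covariance $O(1/N)$. Hence the $L_i(\bone)$ behave like IID $\mathcal{N}(0,4)$ for large-deviations purposes, and
\[
\bbP\bigl[\,|\{i : L_i(\bone) < \gamma\}| \leq \delta N\,\bigr]
= \exp\bigl\{-N \cdot r(\gamma,\delta) + o(N)\bigr\}
\]
with $r(\gamma,\delta) = D\bigl(\delta \,\|\, \Phi(\gamma/2)\bigr)$. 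For any fixed $\delta > 0$ and $\gamma < \gamma_*$ sufficiently small, one checks $r(\gamma,\delta) < \log 2$, so $\bbE[|\mathcal{G}|]$ is exponentially large in $N$.

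Next, to upgrade expected abundance to existence with high probability, I would compute $\bbE[|\mathcal{G}|^2] = \sum_{\bsig,\bsig'} \bbP[\bsig,\bsig' \in \mathcal{G}]$, index pairs by overlap $q = \langle\bsig,\bsig'\rangle/N$, and verify $\bbE[|\mathcal{G}|^2] \lesssim \bbE[|\mathcal{G}|]^2$. For most overlaps the joint probability factorizes to leading exponential order as the product of the two marginals; the danger is the overlap range near $\pm 1$, where fluctuations of $\bG$ strongly correlate the two events. This difficulty is classical in spin-glass Kac--Rice methods and is typically patched either by restricting $\mathcal{G}$ to a conditioned subclass (e.g.\ at fixed approximate energy) or by conditioning on $\bG$ lying in a spectrally typical set. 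A Paley--Zygmund bound then gives $\bbP[\mathcal{G} \neq \emptyset] \geq c > 0$, which one boosts to $1-o(1)$ via Gaussian concentration of $|\mathcal{G}|$. Finally, to pass from Gaussian to general bounded-third-moment disorder, a Lindeberg exchange transfers the conclusion: the event $\{\bsig \in \mathcal{G}\}$ is a threshold condition on the $N$ linear functionals $L_i(\bsig)$ of $\bG$, and swapping each $g_{ij}$ from Gaussian to general incurs $o(1/N^2)$ error per swap under bounded third moments.

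The main obstacle is the second-moment estimate: because gapped states are exponentially rare in $\bSig_N$, the pair correlations across the overlap spectrum must be controlled very precisely to keep $\bbE[|\mathcal{G}|^2] \lesssim \bbE[|\mathcal{G}|]^2$. This is where the threshold $\gamma_*$ is determined, and making the computation quantitative near the worst-case overlap is the delicate technical step.
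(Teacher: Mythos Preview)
The paper does not prove this proposition at all: it is quoted verbatim from \cite[Theorem 3]{DGZ-MaxStable} and used as a black box. So there is no ``paper's own proof'' to compare against; your proposal is an attempt to reprove an imported result.

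As a standalone argument, your sketch has a genuine gap exactly where you flag it. The first-moment count is fine (up to a minor variance miscalculation: $L_i(\bone)$ has variance $\approx 2$, not $4$), but the second-moment step is not a detail---it is the entire content. For rare events like ``$\bsig$ is $(\gamma,\delta)$-gapped,'' pairs at intermediate overlap can dominate $\bbE[|\mathcal{G}|^2]$ by an exponential factor, and the patches you allude to (conditioning on energy, restricting to a spectrally typical set) are not automatic; carrying them out for gapped states specifically would be a substantial piece of work, and you have not indicated how the threshold $\gamma_*$ would emerge from that computation. Your boost from $\bbP[\mathcal{G}\neq\emptyset]\geq c$ to $1-o(1)$ via ``Gaussian concentration of $|\mathcal{G}|$'' is also not justified: $|\mathcal{G}|$ is a discontinuous, integer-valued function of $\bG$ with no evident Lipschitz structure, so standard Gaussian concentration does not apply.

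For context, the proofs in \cite{MSS-FB,DGZ-MaxStable} are constructive rather than moment-based: one exhibits a (randomized or greedy) procedure that, with high probability, outputs a $(\gamma,\delta)$-gapped state. This sidesteps the second-moment correlation analysis entirely and is why the result holds under such weak disorder assumptions. A Lindeberg transfer of a second-moment inequality would be much more delicate than transferring the success probability of an explicit algorithm.
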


We will deduce Theorem~\ref{thm:main} by using such gapped states as bottlenecks for the low-temperature dynamics.
The idea is simply that on a Hamming sphere around any gapped state, the energy must be uniformly smaller than at the gapped state itself.
The definition of gapped state implies a non-trivial first order Taylor approximation for the energy difference. To control the second order term we use the following simple estimate.
Here we set 
\[
\bG^{\sym}=\frac{\bG+\bG^{\top}}{\sqrt{N}}
\]
which has the law of a $\mathsf{GOE}(N)$ matrix multiplied by $\sqrt{2}$.
We also let $\bG^{\sym}_{I\times I}$ be its restriction to $\bbR^I\times \bbR^I$.

\begin{proposition}[{\cite[Proposition 2.2]{huang2025strong}}]
\label{prop:restricted-norm-bound}
There is a constant $C_{\ref{prop:restricted-norm-bound}} > 0$ such that the following holds.
For any $\rho\in (0,1/2)$ and $N$ sufficiently large, the matrix $\bG^{\sym}$ satisfies with probability $1-e^{-cN}$:
\[
    \sup_{|I|\leq \rho N}
    \|\bG^{\sym}_{I\times I}\|_{\op}
    \leq
    C_{\ref{prop:restricted-norm-bound}} \sqrt{\rho\log(1/\rho) N}.
\]
\end{proposition}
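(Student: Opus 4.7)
The plan is to prove Proposition~\ref{prop:restricted-norm-bound} via a standard $\varepsilon$-net argument combined with a union bound over subsets $I$. First I would fix $k \in \{1, \ldots, \lfloor \rho N\rfloor\}$ and a subset $I \subseteq [N]$ with $|I|=k$. Identifying $\bbR^I$ with the coordinate subspace of $\bbR^N$ supported on $I$, I select a $1/4$-net $\cN_I$ of the unit sphere in $\bbR^I$, of cardinality at most $9^k$. Since $\bG^{\sym}_{I\times I}$ is symmetric, the routine approximation step yields
\[
\|\bG^{\sym}_{I\times I}\|_{\op} \;\leq\; 2 \max_{u\in \cN_I} |\langle u,\bG^{\sym} u\rangle|,
\]
reducing the task to a tail bound on a discrete maximum of Gaussians.

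The pointwise input is straightforward: for any fixed unit vector $u\in\bbR^I$ (extended by zeros outside $I$), the random variable $\langle u,\bG^{\sym} u\rangle$ is a linear function of the Gaussian vector $(g_{ij})$ and is hence centered Gaussian. Using the covariance structure of $\bG^{\sym}$ (entries have variance $2/N$ off-diagonal and $4/N$ on the diagonal, with distinct entries on or above the diagonal independent), a direct calculation gives $\Var(\langle u,\bG^{\sym} u\rangle) = (4/N)\|u\|^4 = 4/N$, so
\[
\bbP\left[|\langle u, \bG^{\sym} u\rangle| \geq s\right] \leq 2 e^{-Ns^2/8}.
\]

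To finish, I would union bound across three layers: the $\leq 9^k$ net points, the $\binom{N}{k}\leq (eN/k)^k$ subsets $I$ of size $k$, and $k$ ranging over $\{1,\ldots,\lfloor \rho N\rfloor\}$. The combined log-cardinality is at most $k\log(9eN/k) + O(\log N) \leq \rho N\log(9e/\rho) + O(\log N)$ uniformly in $k\leq \rho N$. Choosing $s = \tfrac{1}{2}C_{\ref{prop:restricted-norm-bound}}\sqrt{\rho\log(1/\rho)N}$, the Gaussian exponent $Ns^2/8 = \Theta(\rho\log(1/\rho)N^2)$ exceeds this log-cardinality by a factor of $N$, so the overall failure probability is bounded by $e^{-cN}$ for some absolute $c>0$ once $C_{\ref{prop:restricted-norm-bound}}$ is taken large enough. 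I do not anticipate any substantial obstacle: the only bookkeeping to verify carefully is the variance identity above and the monotonicity of $k\log(9eN/k)$ in $k$ over the relevant range (which holds whenever $\rho<1/e$, say), both of which are routine.
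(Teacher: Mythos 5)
Your proposal is correct and follows essentially the same approach as the cited source: the paper does not reproduce the proof of Proposition~\ref{prop:restricted-norm-bound} (it is quoted verbatim from \cite{huang2025strong}), but the concluding remark explicitly describes that proof as ``a union bound over $I\subseteq[N]$,'' which is exactly your strategy with a per-subset $\varepsilon$-net. Your variance computation $\Var(\langle u,\bG^{\sym}u\rangle)=4/N$, the $9^k$ net size, and the exponent comparison $\rho N\log(9e/\rho)+O(\log N)\ll C^2\rho\log(1/\rho)N^2/32$ are all correct (and the caveat about $\rho<1/e$ is unnecessary, since $k\mapsto k\log(9eN/k)$ is increasing for all $k<9N$).
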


We fix $\gamma>0$ and choose $\rho>0$ small depending on $\gamma$, and $\delta>0$ small depending on $(\gamma,\rho)$. 

\begin{lemma}
\label{lem:sphere-energies-smaller}
Suppose Proposition~\ref{prop:restricted-norm-bound} holds for the value $\rho$, and $\|\bG^{\sym}\|_{\op}\leq 3$.
Let $\bsig^*$ be a $(\gamma,\delta)$-gapped state.
Then for any $\bsig\in\bSig_N$ with $\|\bsig-\bsig^*\|_{L^1}=\rho N$, we have
\[
H_N(\bsig^*)-H_N(\bsig)
\geq 
\rho\gamma N/2.
\]
\end{lemma}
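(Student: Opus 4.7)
The plan is to exactly Taylor-expand $H_N$ around $\bsig^*$, lower-bound the first-order term using the gapped hypothesis of Definition~\ref{def:gapped}, and use Proposition~\ref{prop:restricted-norm-bound} to make the second-order term subleading. Setting $\bw:=\bsig-\bsig^*$ and $I:=\{i:\sigma_i\neq \sigma_i^*\}$, we have $\bw_i=-2\sigma_i^*$ on $I$, $\bw_i=0$ off $I$, and $|I|=\rho N/2$. Since the Hessian of $H_N$ (viewed as a quadratic form on $\bbR^N$) is $\bG^{\sym}$ and $L_i(\bsig^*)=(\bG^{\sym}\bsig^*)_i$, Taylor gives
\[
H_N(\bsig^*)-H_N(\bsig)\;=\;2\sum_{i\in I}\sigma_i^* L_i(\bsig^*)\;-\;\tfrac12\langle\bw,\bG^{\sym}\bw\rangle.
\]

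For the linear sum I would split $I=G\sqcup B$ according to whether $\sigma_i^* L_i(\bsig^*)\geq\gamma$. By Definition~\ref{def:gapped}, $|B|\leq\delta N$, so the good indices contribute at least $2\gamma(|I|-\delta N)$. On $B$ the gapped hypothesis gives no pointwise lower bound, and the right move is Cauchy--Schwarz against the full vector of local fields: using $\|\bG^{\sym}\bsig^*\|_2\leq\|\bG^{\sym}\|_{\op}\sqrt{N}\leq 3\sqrt{N}$, one obtains $|\sum_{i\in B}\sigma_i^* L_i(\bsig^*)|\leq 3\sqrt{\delta}\,N$. For the quadratic error, $\bw$ is supported on $I$ with $\|\bw\|_2^2=4|I|$, so
\[
|\langle\bw,\bG^{\sym}\bw\rangle|\;\leq\;4|I|\,\|\bG^{\sym}_{I\times I}\|_{\op},
\]
and Proposition~\ref{prop:restricted-norm-bound} makes the right-hand side a factor $O(\sqrt{\rho\log(1/\rho)})$ below the leading linear gain $\gamma|I|$.

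Assembling the pieces yields $H_N(\bsig^*)-H_N(\bsig)\geq \gamma\rho N-2\gamma\delta N-6\sqrt{\delta}\,N-O\bigl(|I|\sqrt{\rho\log(1/\rho)}\bigr)$; choosing $\rho$ small in terms of $\gamma$ so the restricted-norm error is at most $\gamma\rho N/4$, and then $\delta$ small in terms of $(\gamma,\rho)$ so the two $\delta$-terms total at most $\gamma\rho N/4$, delivers the required $\rho\gamma N/2$ bound. The main obstacle I anticipate is the bad set: Definition~\ref{def:gapped} controls only $|B|$ and gives no information on the size of $L_i(\bsig^*)$ there, while a trivial per-coordinate bound such as $|L_i|\lesssim\sqrt{N}$ would cost $\delta N^{3/2}$ and wreck the argument. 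Passing through the $\ell^2$-norm of the whole local-field vector, and thus through the operator-norm hypothesis $\|\bG^{\sym}\|_{\op}\leq 3$, is what tames this contribution and forces the hierarchy $\sqrt{\delta}\ll \gamma\rho$ among the three small parameters. A parallel point is that one must use the \emph{restricted} operator norm of Proposition~\ref{prop:restricted-norm-bound} rather than the trivial bound to keep the quadratic correction subleading in $\rho$.
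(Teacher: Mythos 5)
Your proof is correct and follows essentially the same route as the paper's: exact Taylor expansion around $\bsig^*$ (since $H_N$ is a homogeneous quadratic), lower-bounding the linear term via the gapped condition together with an $\ell^2$ Cauchy--Schwarz for the $\leq\delta N$ bad coordinates using $\|\bG^{\sym}\bsig^*\|_2\leq 3\sqrt N$, and controlling the quadratic term via the restricted operator norm of Proposition~\ref{prop:restricted-norm-bound}, under the same hierarchy $\delta\ll\rho\ll\gamma$. The only difference is cosmetic: you track $|I|=\rho N/2$ and the resulting constants more explicitly than the paper does.
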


\begin{proof}
    Since $H_N(\bsig)=\la \bsig, \bG^{\sym}\bsig\ra/\sqrt{N}=\la \bsig, \bG^{\sym}\bsig\ra/\sqrt{2}$ is a homogenous quadratic function, we have 
    \[
    H_N(\bsig^*)-H_N(\bsig)
    =
    \la
    \nabla H_N(\bsig^*)
    ,
    \bsig^*-\bsig
    \ra 
    -
    \la (\bsig^*-\bsig),\bG^{\sym}(\bsig^*-\bsig)\ra/\sqrt{2}.
    \]
    For the first term, at least $(\rho-\delta) N$ coordinates contribute at least $\gamma$ to the inner product by Definition~\ref{def:gapped}.
    To handle the remaining coordinates, we have by assumption 
    \[
    \|\nabla H_N(\bsig^*)\|_{L^2}
    =
    \|\bG^{\sym} \bsig^*\|_{L^2}
    \leq 
    3\sqrt{N}.
    \]
    This implies any subset of at most $\delta N$ coordinates of $\nabla H_N(\bsig^*)$ has $L^1$ norm at most $O(\sqrt{\delta}N)$.
    Therefore 
    \[
    \la
    \nabla H_N(\bsig^*)
    ,
    \bsig^*-\bsig
    \ra
    \geq 
    (\rho-\delta)\gamma N - O(\sqrt{\delta}N)
    \geq 
    \rho\gamma N-O(\sqrt{\delta}N).
    \]
    For the other term, direct application of Proposition~\ref{prop:restricted-norm-bound} yields 
    \begin{equation}
    \label{eq:quadratic-bound}
    \la (\bsig^*-\bsig),\bG^{\sym}(\bsig^*-\bsig)\ra
    \leq 
    O\big(N\sqrt{\rho^3 \log(1/\rho)}\big).
    \end{equation}
    Since we chose parameters such that $\delta\ll\rho\ll\gamma$, we conclude as desired that 
    \[
     H_N(\bsig^*)-H_N(\bsig)
     \geq 
     \rho\gamma N/2.
     \qedhere
    \]
\end{proof}

\begin{proof}[Proof of Theorem~\ref{thm:main}]
    Choose $\beta$ large depending on $(\gamma,\rho,\delta)$ and define the Hamming sphere and ball
    \[
    S_{\rho}(\bsig^*)
    =
    \{\bsig\in\bSig_N~:~\|\bsig-\bsig^*\|_{L^1}=\rho N\},
    \quad\quad
    B_{\rho}(\bsig^*)
    =
    \{\bsig\in\bSig_N~:~\|\bsig-\bsig^*\|_{L^1}\leq\rho N\}.
    \]
    Then on the high-probability events of Proposition~\ref{prop:DGZ} and Lemma~\ref{lem:sphere-energies-smaller}, we have
    \[
    \frac{\mu_{\beta}(S_{\rho}(\bsig^*))}{\mu_{\beta}(B_{\rho}(\bsig^*))}
    \leq 
    \frac{\mu_{\beta}(S_{\rho}(\bsig^*))}{\mu_{\beta}(\bsig^*)}
    \leq 
    2^N e^{-\beta \rho\gamma N/2}
    \leq 
    e^{-c\beta N}.
    \]
    The latter bound holds since $\beta$ was chosen large.
    Further since $H_N$ is an even function and $\rho<1/2$,
    \[
    \mu_{\beta}(B_{\rho}(\bsig^*))\leq 1/2 \leq 
    \mu_{\beta}(\bSig_N\backslash B_{\rho}(\bsig^*)).
    \]
    Cheeger's inequality applied to the set $B_{\rho}(\bsig^*)$ thus implies the relaxation time of Glauber dynamics is exponentially large, completing the proof (see e.g.\ \cite[Chapter 12]{levin2017markov}).
\end{proof}

\begin{remark}
    Theorem~\ref{thm:main} extends to Wigner matrices with IID entries and uniformly bounded fourth moments.
    Indeed Proposition~\ref{prop:DGZ} continues to ensure the existence of gapped states under these conditions, while the fourth moment bound ensures that $\|\bG^{\sym}\|_{\op}\leq 3$ in Lemma~\ref{lem:sphere-energies-smaller} has high probability \cite{BaiYin}.
    Otherwise, the proof above used Gaussianity only in Proposition~\ref{prop:restricted-norm-bound}, which is proved via union bound over $I\subseteq [N]$.
    However a Lindeberg interpolation argument exactly as in \cite[Section 4]{chatterjee2005simple} shows that 
    \[
    \sup_{\substack{\bx\in \{-1,0,1\}^N\\ \|\bx\|_{L^1}\leq \rho N}}
    |\la \bx,\bG^{\sym}\bx\ra|
    \]
    is universal in the disorder under the moment conditions of e.g.\ Proposition~\ref{prop:DGZ}. 
    This implies universality of the relevant estimate \eqref{eq:quadratic-bound} and hence Theorem~\ref{thm:main}.
\end{remark}

\subsection*{Acknowledgement}

Thanks to Yatin Dandi, David Gamarnik, Anouar Kouraich, Mehtaab Sawhney, and Lenka Zdeborov{\'a} for helpful comments.

\bibliographystyle{alphaabbr}
\bibliography{bib}

\end{document}